\documentclass[12pt]{article}
\usepackage{times}
\usepackage{amsmath,amssymb}
\usepackage{theoremref}
\usepackage[utf8]{inputenc}
\usepackage{indentfirst}
\usepackage{graphicx,tikz}
\usepackage{xcolor}
\usepackage{xskak}
\usepackage[T1]{fontenc}
\usepackage{amsthm}
\usepackage{indentfirst}

\usepackage{parskip}
\newtheorem{problem}{Question}%[section]

\newtheorem{theorem}[problem]{Theorem}

\newtheorem{lemma}[problem]{Lemma}

\title{Confirmation of the Daykin-Frankl Conjecture}
\author{Kada Kálmán Williams}

\begin{document}

\maketitle

\begin{abstract}
In 1983, Daykin and Frankl conjectured that if $P$ is a convex subset of $Q_n$, then it contains at least $|P|\binom{n}{\lfloor n/2\rfloor}2^{-n}$ pairwise incomparable elements. We verify and communicate an LLM-generated proof of this conjecture.
\end{abstract}

\pagenumbering{arabic}
\setlength{\baselineskip}{18pt}

\section{Introduction}

Let $(P,\le )$ be a poset an $S\subseteq P$. We say that:
\begin{itemize}
    \item $S$ is an upset if whenever $x\in S$ and $x<y$, also $y\in S$,
    \item $S$ is a downset if $S^c$ is an upset, i.e. whenever $y\in S$ and $x<y$, also $x\in S$,
    \item $S$ is a convex set if it is the intersection of an upset and a downset, i.e. whenever $u\le v\le w$ and $u,w\in S$, also $v\in S$.
\end{itemize}
An antichain in $P$ is a subset of pairwise incomparable elements. The width $w(P)$ of a finite poset $P$ is the maximum size of an antichain in $P$.

Let $Q_n$ be the power set of a set with $n$ elements, ordered by the subset relation. Daykin and Frankl \cite{DFr} conjectured that if $P\subseteq Q_n$ is a convex set, then
$$\frac{w(P)}{|P|}\ge \frac{w(Q_n)}{|Q_n|}.$$
This has previously been proven if $P$ is a "binary downset" \cite{DHL}, not in general.

For a wider discussion of this topic, see the introduction of Chapter 2 and the proof of the Harris-Kleitman inequality in Chapter 1 of the author's doctoral thesis \cite{Wil}.

The strict aim of this note is to prove the Daykin-Frankl conjecture.

\section{Outline of the Proof}

Recall that if $P$ and $Q$ are posets, $P\times Q$ is the set of ordered pairs $(p,q)$ with $p\in P$ and $q\in Q$, ordered by $(p_1,q_1)\le (p_2,q_2)$ if $p_1\le p_2$ and $q_1\le q_2$ \cite{Wil}. Note that $Q_{n+k}\cong Q_n\times Q_k$ and that the product of convex sets is convex.

We prove a strengthening of the Daykin-Frankl conjecture by backward induction:

\begin{theorem} \label{main}
    Let $n,k\ge 0$ be integers. Suppose $P\subseteq Q_n$ is a convex subset of $Q_n$, ordered by the subset relation. Then
    $$w(P\times Q_k)\ge w(Q_{n+k})|P|2^{-n}.$$
\end{theorem}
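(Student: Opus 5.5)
We argue by induction on $n$, keeping $N=n+k$ fixed. Decreasing $n$ means increasing $k$, which is presumably the ``backward'' direction. We may assume $P\neq\emptyset$.

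\emph{Base case.} For $n=0$ we have $P=Q_0$, so $P\times Q_k\cong Q_N$ and the inequality holds with equality.

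\emph{Splitting $P$.} For $n\ge 1$, split $P$ according to the last coordinate of $Q_n\cong Q_{n-1}\times Q_1$:
$$P_0=\{x\in Q_{n-1}:(x,0)\in P\},\qquad P_1=\{x\in Q_{n-1}:(x,1)\in P\}.$$
Both are convex in $Q_{n-1}$, since they are intersections of $P$ with the two subcubes, and $|P|=|P_0|+|P_1|$. The induction hypothesis, applied to $P_i\subseteq Q_{n-1}$ with $k+1$ in place of $k$, gives
$$w(P_i\times Q_{k+1})\ge w(Q_N)|P_i|2^{-(n-1)}.$$
Averaging over $i=0,1$ shows that the theorem follows from the combination inequality
$$w(P\times Q_k)\ \ge\ \tfrac12\bigl(w(P_0\times Q_{k+1})+w(P_1\times Q_{k+1})\bigr).\qquad(\ast)$$
Empty fibers cause no trouble here.

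\emph{Reformulating $(\ast)$.} Write $R=P\times Q_k$ and $R_i=P_i\times Q_k$; these are convex subsets of $Q_{N-1}$, and $R$ is the union of the layers $(R_0,0)$ and $(R_1,1)$. Also $P_i\times Q_{k+1}\cong R_i\times Q_1$. An antichain of $R_i\times Q_1$ has the form $(X,0)\cup(Y,1)$ with $X,Y$ antichains in $R_i$ and no $x\le y$ for $x\in X$, $y\in Y$. An antichain of $R$ has the form $(S,0)\cup(T,1)$ with $S\subseteq R_0$, $T\subseteq R_1$ antichains, again with no $s\le t$. The inequality $(\ast)$ is then the following statement. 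Given optimal antichains $(X,Y)$ in $R_0\times Q_1$ and $(U,V)$ in $R_1\times Q_1$, we must build an antichain $(S,T)$ of $R$ with $2(|S|+|T|)\ge|X|+|Y|+|U|+|V|$. It is natural to aim for two antichains of $R$ whose total size is at least $|X|+|Y|+|U|+|V|$.

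\emph{Constructing the two antichains.} This is where convexity of $P$ must enter. If $s\in R_0$, $t\in R_1$ and $s\le t$, then every $z$ with $s\le z\le t$ lies in both $R_0$ and $R_1$, because $(s,0)\le(z,0)\le(t,1)$ and $(s,0)\le(z,1)\le(t,1)$. My plan is to encode each antichain by the downset it generates, as in Kleitman's lemma and Sperner-type exchange arguments. Let $D_Y\subseteq D_X$ be the downsets in $R_0$ and $D_V\subseteq D_U$ the downsets in $R_1$. Then form candidate pairs from lattice combinations such as $(D_X\cap D_U,\ D_Y\cup D_V)$ and $(D_X\cup D_U,\ D_Y\cap D_V)$, restricted to the appropriate fibers. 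Each candidate should yield an antichain of $R$ via its maximal or boundary elements. The union/intersection identity $|A\cup B|+|A\cap B|=|A|+|B|$ should then let the two candidates together account for all of $|X|+|Y|+|U|+|V|$.

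\emph{Main obstacle.} The hard step is this exchange construction. We must check that the ``no $s\le t$'' condition survives the lattice operations, which is exactly where the convexity observation above is used. We must also check that the count is genuinely additive, which may require choosing $X,Y,U,V$ extremal, e.g.\ with lexicographically minimal downsets. If this direct exchange fails, the fallback is to prove $(\ast)$ through Dilworth duality. In that version we would exhibit, from minimum chain partitions of $R$, chain partitions of $R_0\times Q_1$ and $R_1\times Q_1$ with total size at most $2w(R)$. Such partitions would come by doubling each chain of $R$ across the $Q_1$ factor, again using convexity to keep the doubled chains inside $R_i\times Q_1$.
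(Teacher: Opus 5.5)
Your reduction is exactly the paper's. You induct on $n$ with $n+k$ fixed, split $P$ along the last coordinate, apply the hypothesis to $P_i$ with $k+1$, and reduce to $(\ast)$. That inequality is the paper's Key Lemma applied to the convex set $P\times Q_k$.

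\textbf{The gap.} $(\ast)$ carries the whole content of the theorem, and you leave it unproved: the exchange step is only a plan, and the Dilworth fallback is not carried out. The plan as written also cannot work. Your pairs place $D_Y\subseteq R_0$ at level $1$ and $D_U\subseteq R_1$ at level $0$. Restricting to fibers therefore discards $Y\setminus R_1$ and $U\setminus R_0$, and nothing compensates for them. Concretely, take $R_1=\emptyset$ and $R_0=\{a<b\}$, with optimal $(X,Y)=(\{b\},\{a\})$. Your candidates become $(\emptyset,\emptyset)$ and $(\{a,b\},\emptyset)$. Their maximal elements have total size $1<2=|X|+|Y|+|U|+|V|$.

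\textbf{The missing idea.} The paper pairs layers \emph{across} the two fibers. Start from
\[
X\times\{0\}\cup U\times\{1\}\qquad\text{and}\qquad Y\times\{0\}\cup V\times\{1\},
\]
both of which lie in $R$. By your convexity remark, any comparability $x\le u$ or $y\le v$ between their levels involves only elements of $B=R_0\cap R_1$. These comparabilities are repaired by trading elements of $(U\cup Y)\cap B$ between the two sets:
\begin{itemize}
\item An element $u\in U\cap B$ stays at level $1$ of the first set unless $z\le u$ for some $z\in X\cup(Y\cap B)$. In that case it moves to level $0$ of the second set.
\item An element $y\in Y\cap B$ stays at level $0$ of the second set unless $y\le z$ for some $z\in V\cup(U\cap B)$. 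In that case it moves to level $1$ of the first set.
\end{itemize}
Elements of $U\cap Y\cap B$ are moved by both rules, so they appear once in each set. Hence the two sizes add up to exactly $|X|+|Y|+|U|+|V|$.

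The antichain checks use only the antichain conditions on $(X,Y)$ and $(U,V)$ together with convexity. For instance, $x\le u$ with $x\in X$ and $u\in U\setminus B$ would force $u\in B$. If $u\in U$ lies strictly below a moved $y$, then $u<y\le z$ for some $z\in V\cup(U\cap B)$, which contradicts that $(U,0)\cup(V,1)$ is an antichain. The second set is handled by the symmetry that reverses the order of $R$ and swaps levels $0$ and $1$.
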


The induction step is guaranteed by the following key lemma:

\begin{lemma} \label{key}
    Let $R$ be a poset and $P\subseteq R\times Q_1$ be a convex subset. We identify $Q_1$ as $\{0,1\}$ and let $P=P_0\times \{0\}\cup P_1\times \{1\}$, where $P_0,P_1\subseteq R$. Then
    $$w(P)\ge \frac{w(P_0\times Q_1)+w(P_1\times Q_1)}{2}.$$
\end{lemma}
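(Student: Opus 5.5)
The plan is to take maximum antichains of $P_0\times Q_1$ and of $P_1\times Q_1$, merge them with the usual ``min/max'' operation on antichains, and then push the result back into $P$ using convexity.

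\textbf{Setup.} Write a maximum antichain of $P_0\times Q_1$ as $B=B_0\times\{0\}\cup B_1\times\{1\}$ and one of $P_1\times Q_1$ as $C=C_0\times\{0\}\cup C_1\times\{1\}$. Here $B_0,B_1\subseteq P_0$ and $C_0,C_1\subseteq P_1$ are antichains of $R$. Since $(a,0)\le(b,1)$ iff $a\le b$, and $(a,1)\le(b,0)$ never holds, being an antichain means exactly that no element of $B_0$ lies below an element of $B_1$, and similarly for $C$. The goal is two antichains $M,N$ of $P$ with $|M|+|N|\ge|B|+|C|$; then $2w(P)\ge|B|+|C|$, which is the claim.

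\textbf{Merging.} The basic tool is the following fact about any poset. For antichains $X,Y$, every element of $X\cup Y$ is minimal or maximal in $X\cup Y$. Indeed, a chain $x<y<z$ inside $X\cup Y$ would force two comparable elements from the same antichain. Elements of $X\cap Y$ are both minimal and maximal, so
$$|\min(X\cup Y)|+|\max(X\cup Y)|\ge|X\cup Y|+|X\cap Y|=|X|+|Y|.$$
Apply this in $R\times Q_1$ with $X=B$ and $Y=C$. This gives antichains $M_0=\min(B\cup C)$ and $N_0=\max(B\cup C)$ of $R\times Q_1$ whose sizes sum to at least $|B|+|C|$.

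\textbf{Repair into $P$.} $M_0$ and $N_0$ need not lie in $P$. The only bad elements are $(b,1)$ with $b\in B_1\setminus P_1$, and $(c,0)$ with $c\in C_0\setminus P_0$. Convexity of $P$ gives the rule: if $u\in P_0$, $w\in P_1$ and $u\le v\le w$, then $v\in P_0\cap P_1$. In particular $P_0$ and $P_1$ are themselves convex. I would repair $N_0$ by replacing each bad $(b,1)$ with $(b,0)$, which lies in $P$ since $b\in P_0$. Dually, I would repair $M_0$ by replacing each bad $(c,0)$ with $(c,1)$, using $c\in P_1$. These replacements preserve cardinality. The real work is checking that they preserve the antichain property.

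Take a bad $(b,1)\in N_0$. The new element $(b,0)$ could be comparable only with some $(w,1)\in N_0$ with $b<w$. Here $w=b'\in B_1$ is impossible, since $B_1$ is an antichain. So $w\in C_1\subseteq P_1$. Then $b\in P_0$ and $b\le w\in P_1$, and convexity pushes elements between $b$ and $w$ into $P_0\cap P_1$. That alone does not put $b$ itself into $P_1$. What actually rules this case out is maximality: $(b,1)<(w,1)$ with $(w,1)\in C$ contradicts $(b,1)\in\max(B\cup C)$. Hence no new comparability arises, and the symmetric argument handles $M_0$.

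\textbf{Expected obstacle.} I expect this last verification to be the main difficulty: the case analysis confirming that $\max$ and $\min$ kill exactly the comparabilities the swap could create. If a case fails, I would first fix it by choosing $B$ and $C$ extremally among maximum antichains (for example, $B$ with $B_1$ as low as possible and $C$ with $C_0$ as high as possible), so that bad elements are forced to be maximal or minimal in the needed way. Failing that, I would give bad elements a tie-breaking rule when forming $\max$ and $\min$.
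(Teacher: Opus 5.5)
\textbf{There is a genuine gap here, and it is structural: no extra case analysis will close it.}

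\textbf{The missed comparability.} When you lower a bad $(b,1)\in N_0$ to $(b,0)$, you only check elements $(w,1)$ above $b$. But $(b,0)$ can now lie below some $(w,0)\in N_0$ with $b<w$. Such a pair was incomparable before the move, and nothing forbids it: the antichain condition on $B$ only rules out elements of $B_0$ lying \emph{below} elements of $B_1$.

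The case you did treat is vacuous for a simpler reason than maximality. Your convexity rule with $v=u=b$ already gives $b\in P_1$ whenever $b\le w\in P_1$.

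\textbf{Unrepaired bad elements.} A bad $(b,1)$ is always maximal, but it can also be minimal, so it may lie in $M_0$. Dually, a bad $(c,0)$ may lie in $N_0$. Your repair leaves both outside $P$.

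\textbf{Why min/max cannot be rescued.} Take $R=\{b<w\}$, $P_0=R$, $P_1=\emptyset$.
Then $P=\{(b,0),(w,0)\}$ is a convex $2$-chain with $w(P)=1$. The \emph{unique} maximum antichain of $P_0\times Q_1$ is $B=\{(w,0),(b,1)\}$, and $C=\emptyset$, so $M_0=N_0=B$. Your repair turns $N_0$ into the chain $\{(b,0),(w,0)\}$ and leaves $(b,1)\notin P$ in $M_0$.

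More to the point, $|M_0|+|N_0|=4>2w(P)$. So no size-preserving transfer of $M_0,N_0$ into $P$ exists, and choosing $B$ extremally does not help. The bound $|\min|+|\max|\ge|X|+|Y|$ double-counts elements that are both minimal and maximal without lying in $X\cap Y$. That surplus cannot be realized inside $P$.

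\textbf{What is needed, and how the paper does it.} You must send each element of $B$ and of $C$ (with multiplicity) to exactly one of two antichains. You must also keep the lowered elements of $B_1$ away from $B_0\times\{0\}$.

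Put $D=P_0\cap P_1$. Convexity gives: if $x\in P_0$, $y\in P_1$ and $x\le y$, then $x,y\in D$. The paper's two antichains are
$B_0\times\{0\}\cup\big((C_0\setminus D)\cup L\big)\times\{1\}$ and $C_1\times\{1\}\cup\big((B_1\setminus D)\cup H\big)\times\{0\}$.
The sets $L$ and $H$ are filled by a sorting rule:
\begin{itemize}
\item each $x\in C_0\cap D$ goes to $L$, unless some element of $B_0\cup(B_1\cap D)$ lies below it, in which case it goes to $H$;
\item each $y\in B_1\cap D$ goes to $H$, unless it lies below some element of $C_1\cup(C_0\cap D)$, in which case it goes to $L$.
\end{itemize}
This sorting rule is exactly what your ``tie-breaking'' fallback leaves unspecified. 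With it, the sizes add up to exactly $|B|+|C|$. Each potential comparability is then excluded by the antichain property of $B$ or $C$, by the convexity consequence above, or by the sorting rule itself.
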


\begin{proof}[Proof of Theorem \ref{main} via Lemma \ref{key}]
    We induct on $n$. If $n=0$, $Q_0=\{\emptyset\}$, so $P=\emptyset$ or $P=Q_0$, with $w(P\times Q_k)=0$ or $=w(Q_k)$, respectively, as claimed. Let us now prove the result for $n+1$, given that it holds for $n$.

    Let $P\subseteq Q_{n+1}\cong Q_n\times Q_1$ be convex and $P=P_0\times \{0\}\cup P_1\times\{1\}$. Then also $P\times Q_k\subseteq Q_{n+1}\times Q_k\cong Q_{n+k}\times Q_1$ is convex. By Lemma \ref{key}, with $R=Q_{n+k}$,
    $$w(P\times Q_k)\ge \frac{w(P_0\times Q_{k+1})+w(P_1\times Q_{k+1})}{2}.$$
    Since $P_0,P_1\subseteq Q_n$ are convex, the result follows by induction.    
\end{proof}

\section{Proof of Key Lemma}

\begin{proof}[Proof of Lemma \ref{key}]
Consider an antichain of size $w(P_i\times Q_1)$ in $P_i\times Q_1$ and partition it as $L_i\times \{0\}\cup H_i\times \{1\}$, where $H_i,L_i\subseteq P_i$ ($i=0,1$). Then
$$|L_0|+|L_1|+|H_0|+|H_1|=w(P_0\times Q_1)+w(P_1\times Q_1).$$
If we specify two antichains in $P$ with sum of sizes equal to this, we are done.

Observe that if $x\in P_0$ and $y\in P_1$ with $x\le y$, then $(x,0),(y,1)\in P$. Since $P$ is convex, this implies $(x,1),(y,0)\in P$, and so $x,y\in P_0\cap P_1=:B$ ($\star$).

We sort elements of $L_1\cup H_0$ in $B$ into the sets $L$ and $H$ as follows:
\begin{itemize}
    \item If $x\in L_1\cap B$, sort $x$ into $L$ if there is no $z\in L_0\cup (H_0\cap C)$ such that $z\le x$. Otherwise, $x$ is sorted into $H$.
    \item If $y\in H_0\cap B$, sort $y$ into $H$ if there is no $z\in H_1\cup (L_1\cap C)$ such that $y\le z$. Otherwise, $y$ is sorted into $L$.
\end{itemize}

To finish, it clearly suffices to establish that the following are antichains:
$$L_0\times\{0\}\cup (L_1\setminus B\cup L)\times \{1\},\qquad H_1\times\{1\}\cup (H_0\setminus B\cup H)\times \{0\}.$$
If the former is an antichain, then so is the latter, by hourglass symmetry.

Suppose, for the sake of contradiction, that some two elements in $L_0\times\{0\}$ and $(L_1\setminus B\cup L)\times \{1\}$ are comparable. If $z\in L_0$, $x\in L_1\setminus B\cup L$, and $z\le x$:
\begin{itemize}
    \item if $x\in H_0$, then $(z,0)\le (x,1)$, contradicting the antichain property,
    \item if $x\in L_1$, then by ($\star$), $x\in B$, so $x\in L$, contradicting the definition of $L$.
\end{itemize}
Next, $L_1\setminus B\cup L=(L_1\setminus B)\cup (L_1\cap L)\cup (H_0\cap L)$. Elements of $L_1$ are incomparable. By ($\star$), elements of $L_1\setminus B$ are incomparable with elements of $H_0$. Hence, it is for $x\in L_1\cap L$ and $y\in H_0\cap L$ that $x<y$ or $y\le x$. Since $x$ is in $L$, not $y\le x$. Since $y$ is not in $H$, there is a $z\in H_1\cup (L_1\cap C)$ such that $y\le z$, so $x<z$. This contradicts how $L_1\times \{0\}\cup H_1\times \{1\}$ is an antichain. Our claim follows. \end{proof}

\section{Acknowledgements}

The author is financially supported by a generous USAMO prize award from the Akamai Foundation, and would like to thank Prof Imre Leader for highlighting this conjecture. Furthermore, although it has resolved a small percentage of open problems in combinatorics as of yet, the author commends ChatGPT 5.6 Sol Pro for its transformational impact and for generating the proof content of this note.

\textsc{Independent author in Szeged, Hungary.} \\ \\
\textit{E-mail address:} \texttt{kkw25@cantab.ac.uk}

\end{document}